\newcommand{\spaceR}{{\mathbb{R}}}
\newcommand{\spaceC}{{\mathbf C}}
\newcommand{\spaceL}{{\mathbf L}}
\newcommand{\spaceAC}{{\mathbf{AC}}}
\newcommand{\norm}[1]{\lVert#1\rVert}
\newcommand{\w}{\widetilde}
\newcommand{\dd}{\displaystyle}
\renewcommand{\le}{\leqslant}
\renewcommand{\ge}{\geqslant}
\newcommand{\tg}{\tan}
\DeclareMathOperator*{\vraisup}{ess\, sup}
\DeclareMathOperator*{\vraiinf}{ess\, inf}
\newtheorem{theorem}{Theorem}
\newtheorem{lemma}{Lemma}
\newtheorem{assertion}{Proposition}
\theoremstyle{definition}
\theoremstyle{remark}
\title[On minimal periods]{On minimal periods of solutions of higher order functional differential equations
}
\thanks{State National Research Polytechnical University of Perm, Perm,
614990,   Komsomolsky~pr.~29, Russia; bravyi@perm.ru}
\author{E.~Bravyi}
\begin{document}

\begin{abstract}
We show that a problem on minimal periods of solutions of Lipschitz functional differential equations is closely related to the unique solvability of the periodic problem for linear functional differential equations. Sharp bounds for minimal periods of non-constant solutions of higher order functional differential equations with different Lipschitz nonlinearities are obtained.
\end{abstract}

\maketitle



\section{Introduction}
\label{Intro}
Consider a problem on periodic solutions of the equation
\begin{equation}\label{n-3}
x^{(n)}(t)=f(x(\tau(t)),\quad t\in\spaceR^1,
\end{equation}
where $x(t)\in\spaceR^m$, $f:\spaceR^m\to\spaceR^m$ is a Lipschitz function, $\tau:\spaceR^1\to\spaceR^1$ is a measurable function.


If $\tau(t)\equiv t$, the sharp lower estimate
\begin{equation}\label{e:ode}
T\ge 2\pi/L^{1/n}
\end{equation}
for periods $T$ of non-constant periodic solutions to
\eqref{n-3}
is obtained
in \cite{bravyiei1} for $n=1$ and \cite{bravyiei2} for $n\geqslant 1$ for Lipschitz $f$ in the Euclidian norm, and in  \cite{bravyiei3} for even $n$ and Lipschitz functions  $f$ satisfying the condition
\begin{equation}\label{e:3}
\max\limits_{i=1,\ldots,m}|f_i(x)-f_i(\tilde x)|\leqslant L \max\limits_{i=1,\ldots,m}|x_i-\tilde x_i|,\quad x,\w x\in\spaceR^m.
\end{equation}
For equations \eqref{n-3} with an arbitrary piece-wise continuous
deviating argument $\tau$ and Lipschitz $f$
under condition \eqref{e:3},
the best constants in the lower estimates for periods $T$ of non-constant periodic solutions
are found by A.~Zevin for $n=1$ \cite{bravyiei4}
\begin{equation*}
    T\ge 4/L, 
\end{equation*}
and for even $n$ \cite{bravyiei3}
\begin{equation*}
    T\ge \alpha(n)/L^{1/n}. 
\end{equation*}
In the latter case, the best constants $\alpha(n)$ are defined implicitly with the help of solutions to some boundary value problem for an ordinary differential equation of $n$-th order.

Here,  for all $n$, we discover a simple representation of the best constants
in the estimate for minimal periods of non-constant periodic solutions of some more general equations than \eqref{n-3} with
Lipschitz nonlinearities. Some properties of the sequence of the best constants will be obtained. It turns out  that the best constants in lower estimates of periods are the Favard constants.

If equation \eqref{n-3} has a $T$-periodic solution $x$ with absolutely continuous derivatives up to the order $n-1$, then the contraction of $x$ on the interval $[0,T]$ is a solution to  the periodic boundary value problem
\begin{equation}\label{n-4}
x^{(n)}(t)=f(x(\,\w \tau(t))),\quad t\in[0,T],\quad
x^{(i)}(0)=x^{(i)}(T),\quad i=0,\ldots,n-1,
\end{equation}
with $\w \tau(t)=\tau(t+k(t)T)$, $t\in[0,T]$, for some integer $k(t)$ such that $t+k(t)T\in[0,T]$.
If boundary value problem \eqref{n-4} does not have non-constant solutions, then \eqref{n-3} does not have $T$-periodic non-constant solutions either.

Therefore, we can
consider the equivalent periodic boundary value problem for a system of $m$ functional differential equations of the $n$-th order
\begin{equation}\label{e:1}
x^{(n)}(t)=(Fx)(t),\quad t\in[0,T],\quad
x^{(i)}(0)=x^{(i)}(T),\quad i=0,\ldots,n-1,
\end{equation}
where $x\in \spaceAC^{n-1}([0,T],{\spaceR}^m)$. We assume that for the operator
$F:{\spaceC}([0,T],{\spaceR}^m)\to{\spaceL}_\infty([0,T],{\spaceR}^m)$ there exists
a positive constant $L\in\spaceR^1$ such that for all functions $x\in{\spaceC}([0,T],{\spaceR}^m)$ the following inequality holds
\begin{equation}\label{e:2}
\begin{array}{l}
\max\limits_{i=1,\ldots,m}
\left(\vraisup\limits_{t\in[0,T]}(Fx)_i(t)-\vraiinf\limits_{t\in[0,T]}(Fx)_i(t)\right)
\le \\L\,
\max\limits_{i=1,\ldots,m}\left(\max\limits_{t\in[0,T]}x_i(t)-\min\limits_{t\in[0,T]}x_i(t)\right).
\end{array}
\end{equation}
Here and further we use the following functional spaces:
$\spaceC([0,T],{\spaceR}^m)$ is the space of continuous functions $x:[0,T]\to\spaceR^m$;
$\spaceAC^{n-1}([0,T],{\spaceR}^m)$ is the space of  functions with absolutely continuous derivatives
up to order $n-1$;
${\spaceL}_\infty([0,T],{\spaceR}^m)$ is the space of measurable essentially bounded functions $z:[0,T]\to\spaceR^m$ with the norm $\|z\|_{{\spaceL}_\infty}=\max\limits_{i=1,\ldots,m}\vraisup\limits_{t\in[0,T]}|z_i(t)|$;
${\spaceL}_1([0,T],{\spaceR}^m)$ is the space of all integrable functions $z:[0,T]\to\spaceR^m$ with the norm $\|z\|_{{\spaceL}_1}=\max\limits_{i=1,\ldots,m}\int_0^T|z_i(t)|\,dt$.

If in \eqref{e:1} $(Fx)(t)=f(x(\tau(t)))$, $t\in[0,T]$, where $\tau:[0,T]\to[0,T]$ is measurable, then
condition \eqref{e:2} implies that the function $f:{\spaceR}^m\to{\spaceR}^m$ is Lipschitz and satisfies \eqref{e:3}.

Our approach is close to the work \cite{Ronto} where the periodic boundary value problem is considered  on the interval and a general way to obtain the lower estimate of the periods of non-constant  solutions is proposed.

Note that there are a number of papers on minimal periods of non-constant solutions for different classes of equations, in particular, \cite{LY} in Hilbert spaces, \cite{TYL} in Banach spaces with delay, \cite{Vi} in Banach spaces, \cite{Bu} in Banach spaces and difference equations, \cite{Med} in Banach spaces and differentiable delays, \cite{Ar} in spaces $\ell_p$ and $\spaceL_p$.

\section{Main results}
\label{Main}

Define rational constants $K_n$, $n=1,2,\ldots$, by the equalities
\begin{equation}\label{e-55}
K_n=\displaystyle\frac{({2^{n+1}-1})|B_{n+1}|} {2^{n-1}(n+1)!}\text{ if $n$ is odd},
            \quad
          K_n= \frac{|E_{n}|} {4^{n}n!}\text{ if $n$ is even},
\end{equation}
where $B_n$ are the Bernoulli numbers, $E_n$ are the Euler numbers (see, for examples, \cite[p.~804]{AS}).

\begin{assertion}\label{p-1}
\begin{enumerate}
\item[1)] $K_n$ are the Favard constants, the best constants in the inequality
\begin{equation*}
  \max\limits_{t\in[0,1]}  |x(t)|\le K_n\vraisup\limits_{t\in[0,1]} |x^{(n)}(t)|
 \end{equation*}
which holds for all functions $x\in\spaceAC^{n-1}([0,1],\spaceR^1)$
such that $x^{(n)}\in\spaceL_\infty([0,1],R^1)$ and
$x^{(i)}(0)=x^{(i)}(1)$, $i=0,\ldots,n-1$, $\int_0^1 x(t)\,dt=0$,

\item[2)]$\dd
K_n (2\pi)^n=
\min\limits_{\xi\in\spaceR} \int_0^{2\pi}|\phi_n(s)-\xi|\,ds=\frac{4}{\pi}\sum_{k=1}^\infty\frac{(-1)^{(n+1)(k+1)}}{(2k-1)^{n+1}},
$
where
$
   \dd \phi_n(t)=\frac{1}{\pi}\sum_{k=1}^\infty k^{-n}\cos\left(kt-\frac{n\pi}{2}\right),
$
\item [3)]
$\dd
  K_{n+1}=\frac{1}{8(n+1)}\sum_{k=0}^n K_{k}K_{n-k},\  n\geqslant 1,\ K_0=1,\ K_1=1/4,
$
\item [4)]
$
\dd\frac{1}{\cos(t/4)}+\tg(t/4)=1+\sum_{n=1}^\infty K_{n}t^n,\quad |t|<2\pi,
$
\item [5)]
$\lim\limits_{n\to\infty} K_n(2\pi)^n=4/\pi,$
\item [6)]
$
K_1=1/4,\ K_2=1/32,\ K_3=1/192,\ K_4=5/6144,\ K_5=1/7680,\
K_6=61/2949120,\ldots
$
\end{enumerate}
\end{assertion}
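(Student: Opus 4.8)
The plan is to make the $L_1$-approximation quantity $\min_{\xi}\int_0^{2\pi}|\phi_n(s)-\xi|\,ds$ of statement 2) the hub of the whole proposition: statements 1) and 5) are read off from it, while the purely algebraic statements 3), 4), 6) flow from the definition \eqref{e-55} through the generating function. I would first dispatch the rightmost equality in 2), which only links \eqref{e-55} to a Dirichlet series. Expanding $\cos(ks-n\pi/2)$ shows that $\phi_n$ is a pure sine series $\tfrac1\pi\sum_k(\pm1)k^{-n}\sin ks$ for odd $n$ and a pure cosine series $\tfrac1\pi\sum_k(\pm1)k^{-n}\cos ks$ for even $n$, so the claimed series is $\tfrac4\pi\sum_{k\ge1}(2k-1)^{-(n+1)}$ (odd $n$) or $\tfrac4\pi\sum_{k\ge1}(-1)^{k-1}(2k-1)^{-(n+1)}$ (even $n$). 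The classical closed forms for $\zeta(2m)$ and for $\beta(2m+1)$ in terms of $B_{2m}$ and $E_{2m}$ reproduce exactly the value $K_n(2\pi)^n$ coming from \eqref{e-55}; this is routine parity bookkeeping.

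The core is the middle equality of 2). I would use the duality $\min_\xi\int_0^{2\pi}|\phi_n-\xi| = \max\{\int_0^{2\pi}\phi_n\,g : \|g\|_\infty\le1,\ \int_0^{2\pi}g=0\}$. The test functions $\operatorname{sgn}(\sin s)$ (odd $n$) and $\operatorname{sgn}(\cos s)$ (even $n$) are feasible, and their Fourier series $\tfrac4\pi\sum_j(2j+1)^{-1}\sin((2j+1)s)$ and $\tfrac4\pi\sum_j(-1)^j(2j+1)^{-1}\cos((2j+1)s)$, paired with $\phi_n$ by orthogonality, select precisely the odd harmonics and return the series of 2); this gives the inequality $\ge$. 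For $\le$ I would take $\xi=0$ and write $\int_0^{2\pi}|\phi_n| = \int_0^{2\pi}\phi_n\,\operatorname{sgn}\phi_n$, so it remains to identify $\operatorname{sgn}\phi_n$ with the chosen harmonic sign, i.e. to prove that $\phi_n$ changes sign only at the zeros of its leading harmonic (whence $0$ is a median). \emph{This oscillation property is the main obstacle.} I would establish it by induction from $\phi_n'=\phi_{n-1}$ (immediate from the series), with base case the sawtooth $\phi_1(s)=(\pi-s)/(2\pi)$, showing that each antiderivative preserves exactly one sign change per half-period.

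Statement 1) then follows from 2) by the kernel representation. A function $x$ on $[0,1]$ with $x^{(i)}(0)=x^{(i)}(1)$ and $\int_0^1 x=0$ satisfies $x=G_n*x^{(n)}$ with the zero-mean periodic kernel $G_n(t)=\sum_{k\ne0}(2\pi ik)^{-n}e^{2\pi ikt}$, the condition $\int_0^1 x^{(n)}=0$ being forced by periodicity. Using this zero mean, $|x(t)|\le\|x^{(n)}\|_\infty\min_\xi\int_0^1|G_n-\xi|$ for every $t$, with equality approached along $x^{(n)}=\operatorname{sgn}(G_n-\xi^{*})$ (smoothed to meet the $\spaceAC^{n-1}$ requirement); the change of variables $s=\theta/(2\pi)$ turns $G_n$ into a multiple of $\phi_n$ and yields best constant $(2\pi)^{-n}\min_\xi\int_0^{2\pi}|\phi_n-\xi|=K_n$, the factor $(2\pi)^n$ being produced by the rescaling.

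For the algebraic block I would start from the generating function 4). The standard expansions $\sec u=\sum_k|E_{2k}|u^{2k}/(2k)!$ and $\tan u=\sum_k 2^{2k}(2^{2k}-1)|B_{2k}|u^{2k-1}/(2k)!$ at $u=t/4$ have Taylor coefficients matching \eqref{e-55} term by term, with $K_0=\sec0=1$ and radius $2\pi$ fixed by the pole of $\sec(t/4)$; this is 4). Setting $g(t)=\sec(t/4)+\tan(t/4)$ and using $1/g=\sec(t/4)-\tan(t/4)$ gives the Riccati equation $8g'=g^2+1$, whose coefficient comparison is exactly $8(n+1)K_{n+1}=\sum_{k=0}^n K_kK_{n-k}$ for $n\ge1$, proving 3); the values in 6) are read off from either formula. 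Finally 5) is immediate from the series in 2): as $n\to\infty$ every term with $2k-1\ge3$ tends to $0$, leaving $K_n(2\pi)^n\to4/\pi$.
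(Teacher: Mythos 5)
The paper itself offers no proof of this proposition---it simply cites Favard, Bernstein, Levin--Stechkin and Bravyi---so your attempt has to stand on its own. Most of it does: the reduction of 1) to 2) via the zero-mean periodic kernel, the evaluation of the Dirichlet series through the classical closed forms for $\zeta(2m)$ and $\beta(2m+1)$ (which indeed reproduces \eqref{e-55}), the identification of 4) with the secant/tangent expansions at $u=t/4$, the Riccati identity $8g'=g^2+1$ for $g=\sec(t/4)+\tan(t/4)$ giving 3), and the deductions of 5) and 6) are all correct.

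The genuine gap is in the step you yourself call the crux: the upper bound in the middle equality of 2) for \emph{even} $n$. You claim that $\phi_n$ changes sign exactly at the zeros of its leading harmonic, so that $0$ is a median and $\min_\xi\int_0^{2\pi}|\phi_n-\xi|\,ds=\int_0^{2\pi}|\phi_n|\,ds=\pm\int_0^{2\pi}\phi_n\operatorname{sgn}(\cos s)\,ds$. This is false already for $n=2$: summing the series, $\phi_2(s)=-\frac{1}{4\pi}\bigl((s-\pi)^2-\pi^2/3\bigr)$ on $[0,2\pi]$, whose zeros are $s=\pi(1\pm 1/\sqrt{3})$, not $\pi/2$ and $3\pi/2$; the median is $\phi_2(\pi/2)=\pi/48\neq 0$, and $\int_0^{2\pi}|\phi_2|\,ds=\frac{2\pi^2}{9\sqrt{3}}>\frac{\pi^2}{8}=K_2(2\pi)^2$. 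So the identity you need at $\xi=0$ fails, and no induction can keep the sign changes of $\phi_n$ pinned at the harmonic zeros, because integration shifts them (this is exactly the parity asymmetry between the odd case, where $\phi_n$ is an odd sine series and your argument is fine, and the even case). The repair stays inside your framework: prove by your induction $\phi_n'=\phi_{n-1}$ that odd-indexed kernels have sign $\pm\operatorname{sgn}(\sin s)$ (oddness about $0$ and $\pi$ plus the count of critical points), hence even-indexed kernels are even and strictly monotone on $(0,\pi)$ and $(\pi,2\pi)$, so that $\operatorname{sgn}\bigl(\phi_n-\phi_n(\pi/2)\bigr)=\mp\operatorname{sgn}(\cos s)$. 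Then take $\xi^*=\phi_n(\pi/2)$ in the upper bound; the constant drops out of the pairing because the square wave has zero mean, $\int_0^{2\pi}|\phi_n-\xi^*|\,ds=\mp\int_0^{2\pi}\phi_n\operatorname{sgn}(\cos s)\,ds$, and this matches your duality lower bound. The same median bookkeeping should be used in the extremal function for 1). With that correction your proof goes through.
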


\begin{proof}
All these assertions are well known. Proofs of 1), 2), 6) one can see in \cite{Favard,Bernshtein,Levin,Levin1,Bravyi}, 3), 4), 5) in, for example, \cite{Bravyi}.
\end{proof}

\begin{theorem} \label{t-1}
        If $F$ satisfies inequality \eqref{e:2} and  periodic problem \eqref{e:1} has a non-constant solution, then
        \begin{equation}\label{e:4}
            T\ge \frac{1}{(L\,K_n)^{1/n}}.
        \end{equation}
\end{theorem}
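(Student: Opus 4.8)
The plan is to reduce the theorem to the sharp Favard inequality of Proposition~\ref{p-1}, part~1). Suppose problem~\eqref{e:1} has a non-constant solution $x\in\spaceAC^{n-1}([0,T],\spaceR^m)$. Since the estimate~\eqref{e:2} is stated component-wise with maxima over indices $i$, I would first pass to a single scalar coordinate. Choose the index $i$ that realizes the maximum on the right-hand side of~\eqref{e:2}, i.e.\ the coordinate $x_i$ with the largest oscillation $\max_t x_i(t)-\min_t x_i(t)$; call this coordinate $u(t)=x_i(t)$ and let $z(t)=(Fx)_i(t)$, so that $u^{(n)}(t)=z(t)$ on $[0,T]$ with periodic boundary conditions $u^{(k)}(0)=u^{(k)}(T)$, $k=0,\ldots,n-1$. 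Because $x$ is non-constant and $i$ realizes the maximal oscillation, $u$ is non-constant, so its oscillation $\operatorname{osc} u:=\max u-\min u$ is strictly positive.

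Next I would rescale to the unit interval to invoke Proposition~\ref{p-1}. Set $v(s)=u(Ts)$ for $s\in[0,1]$; then $v^{(n)}(s)=T^n u^{(n)}(Ts)=T^n z(Ts)$, and $v$ inherits the periodic boundary conditions $v^{(k)}(0)=v^{(k)}(1)$. To apply the Favard inequality I must arrange the extra normalization $\int_0^1 v(s)\,ds=0$; this is harmless because subtracting the mean changes neither $v^{(n)}$ nor the oscillation of $v$, so I replace $v$ by $v-\int_0^1 v$. Part~1) of Proposition~\ref{p-1} then gives
\begin{equation*}
  \max_{s\in[0,1]}|v(s)|\le K_n\,\vraisup_{s\in[0,1]}|v^{(n)}(s)|
  = K_n\,T^n\,\vraisup_{t\in[0,T]}|z(t)|.
\end{equation*}

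The key step is to convert both sides into oscillations rather than sup-norms, so that the Lipschitz-type hypothesis~\eqref{e:2} can be applied. For the left side, since $v$ has zero mean its oscillation and its sup-norm are comparable; more usefully, the Favard inequality applied to functions of zero mean is really an oscillation estimate, and I expect the sharp relation to read $\operatorname{osc} u\le K_n T^n\,(\vraisup z-\vraiinf z)$ after a symmetrization argument (applying the inequality to $v$ and to $-v$, and noting that adding a constant to $z$ only shifts $v$ by a polynomial that must vanish under the periodic conditions, hence only affects the mean). This is the point where the oscillation form of the Favard constant — precisely the quantity $\min_\xi\int|\phi_n-\xi|$ appearing in part~2) — becomes essential. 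Chaining this with~\eqref{e:2}, where both the left oscillation of $z=(Fx)_i$ and the right oscillation of $u=x_i$ are controlled by the corresponding maxima over all coordinates, yields
\begin{equation*}
  \operatorname{osc} u\le K_n T^n\,L\,\operatorname{osc} u,
\end{equation*}
since $i$ was chosen to maximize both sides simultaneously. Dividing by the positive number $\operatorname{osc} u$ gives $1\le L K_n T^n$, which is exactly~\eqref{e:4}.

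The main obstacle I anticipate is the bookkeeping in converting the sup-norm Favard inequality of part~1) into the correct oscillation inequality with the same best constant $K_n$, and justifying that subtracting means and constants is legitimate under the periodic boundary conditions (one must check that the constant shift absorbed on the right of~\eqref{e:2} does not degrade the constant). A secondary subtlety is confirming that the single coordinate $u$ can indeed be taken to realize the maxima on \emph{both} sides of~\eqref{e:2}; strictly, the maximizing index for the oscillation of $x$ need not be the maximizing index for $Fx$, so I would instead keep the two maxima separate, bound $\operatorname{osc} u_i$ for the worst input coordinate and apply the scalar Favard estimate to \emph{that} coordinate of the output, relying on the fact that~\eqref{e:2} already takes the maximum over output coordinates on its left. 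Handling this index matching cleanly, rather than the analytic inequality itself, is where the care is required.
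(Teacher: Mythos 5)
Your overall strategy---take the coordinate $x_j$ of maximal oscillation, apply a Favard-type estimate to it, and chain with \eqref{e:2} to get $1\le L\,K_nT^n$---is sound, and your final resolution of the index-matching worry is the right one (you only need the maximum on the left of \eqref{e:2} to dominate the particular output coordinate $j=\arg\max_i(\max x_i-\min x_i)$). This is also genuinely different from the paper's route: the paper first reduces (Lemma~\ref{l-1}) to a scalar \emph{linear} problem $y^{(n)}(t)=L\,y(\tau(t))+C$ with a deviating argument, and then proves (Lemma~\ref{l-2}) its unique solvability under $L K_nT^n<1$ by a Fredholm-plus-contraction argument, so that a non-constant solution forces $LK_nT^n\ge1$. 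However, your key analytic step has a genuine gap. You need the oscillation inequality
\begin{equation*}
\max_{t} u(t)-\min_{t} u(t)\ \le\ K_nT^n\Bigl(\vraisup_{t} u^{(n)}(t)-\vraiinf_{t} u^{(n)}(t)\Bigr)
\end{equation*}
for periodic $u$, and you propose to obtain it from part 1) of Proposition~\ref{p-1} by ``adding a constant to $z=u^{(n)}$,'' claiming this only shifts $v$ by a polynomial that vanishes under the periodic conditions. That claim is false: periodicity of $u^{(n-1)}$ forces $\int_0^T z\,dt=0$, so $z-c$ has mean $-cT\neq0$ for $c\neq0$ and is not the $n$-th derivative of \emph{any} function satisfying the periodic boundary conditions; there is no admissible function to which part 1) can be applied with right-hand side $z-c$ (and applying the inequality to $v$ and $-v$ gains nothing, since it is symmetric). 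What part 1) yields directly is only $\max|u-\bar u|\le K_nT^n\vraisup|z|$, hence $\operatorname{osc}u\le 2K_nT^n\operatorname{osc}z$, and your chain then gives only $T\ge 1/(2LK_n)^{1/n}$, losing a factor $2^{1/n}$ against \eqref{e:4}.

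The inequality you want is true, but proving it with the sharp constant requires the convolution representation that underlies part 2) of Proposition~\ref{p-1}, which is exactly the machinery of the paper's Lemma~\ref{l-2} (formula \eqref{e:333}): for periodic $u$ with $u^{(n)}=z$ one has, for \emph{all} constants $\xi$ and $c$,
\begin{equation*}
u(t)-\bar u=\frac{T^{n-1}}{(2\pi)^{n-1}}\int_0^T\bigl(\phi_n(2\pi s/T)-\xi\bigr)\bigl(z(t-s)-c\bigr)\,ds,
\end{equation*}
because $\int_0^{2\pi}\phi_n\,ds=0$ and $\int_0^T z\,ds=0$ allow both constants to be inserted for free. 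Choosing $\xi$ to realize the infimum in part 2) and $c$ to be the midpoint of the essential range of $z$, so that $\vraisup|z-c|=\tfrac12\operatorname{osc}z$, gives $\max|u-\bar u|\le\tfrac12 K_nT^n\operatorname{osc}z$ and hence the factor-free oscillation inequality. With that lemma established your argument closes and is arguably more direct than the paper's (no reduction to a deviating-argument equation, no contraction/uniqueness step); as written, however, the proposal does not reach the sharp constant, and the step you yourself flag as the main obstacle is precisely where it breaks.
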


To prove Theorem \ref{t-1}, we need two lemmas.
\begin{lemma}\label{l-1}
Let $F$ satisfy \eqref{e:2}. If problem \eqref{e:1} has a non-constant solution, there exist a measurable function $\tau:[0,T]\to[0,T]$ and a constant $C$ such that one of non-constant components of the solution satisfies the scalar periodic boundary problem
\begin{equation}\label{e:1y}
\left\{
\begin{array}{l}
y^{(n)}(t)=L\,y(\tau(t))+C,\quad t\in[0,T],\\[3pt]
y^{(i)}(0)=y^{(i)}(T),\quad i=0,\ldots,n-1.
\end{array}
\right.
\end{equation}
\end{lemma}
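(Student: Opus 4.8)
The plan is to start from a non-constant solution $x\in\spaceAC^{n-1}([0,T],\spaceR^m)$ of problem \eqref{e:1} and extract a single scalar component that witnesses the non-constancy. Since $x$ is non-constant, some component $x_j$ is non-constant, so its oscillation $\max_{t}x_j(t)-\min_t x_j(t)$ is strictly positive. The first step is to identify the index $i^*$ attaining the outer maximum on the right-hand side of \eqref{e:2}, i.e.\ the component of largest oscillation; I expect this component to be the natural candidate for $y$ (after possibly replacing $x$ by its largest-oscillation component, which is automatically non-constant). Set $y=x_{i^*}$, so that $y$ inherits the periodicity conditions $y^{(i)}(0)=y^{(i)}(T)$ for $i=0,\ldots,n-1$ from the corresponding component of \eqref{e:1}.

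Next I would rewrite the $i^*$-th scalar equation $y^{(n)}(t)=(Fx)_{i^*}(t)$ in the affine form demanded by \eqref{e:1y}. The idea is that inequality \eqref{e:2} controls only the oscillation of $(Fx)_{i^*}$, not its pointwise values, so I want to reconstruct the right-hand side as $L\,y(\tau(t))+C$ by choosing, for each $t$, a point $\tau(t)\in[0,T]$ and a shift. Concretely, because $y$ is continuous its range is the full interval $[\min y,\max y]$, and by \eqref{e:2} the essential range of $(Fx)_{i^*}$ has length at most $L(\max y-\min y)=L\cdot\mathrm{osc}(y)$, which equals the length of the interval $\{L\,y(s)+C:s\in[0,T]\}$ once we fix the constant $C$ appropriately. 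Thus for a.e.\ $t$ the value $(Fx)_{i^*}(t)$ lies in the range of the affine map $s\mapsto L\,y(s)+C$, and I can solve $(Fx)_{i^*}(t)=L\,y(\tau(t))+C$ for some $\tau(t)$. The key step is to perform this selection measurably: I would invoke a measurable selection theorem (or explicitly use the intermediate value structure, e.g.\ $\tau(t)$ as a measurable inverse image under the continuous function $y$) to produce a measurable $\tau:[0,T]\to[0,T]$.

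The main obstacle I anticipate is precisely this measurable selection and the correct bookkeeping of the constant $C$. One must choose $C$ so that the affine range $[\,L\min y+C,\,L\max y+C\,]$ actually contains the essential range of $(Fx)_{i^*}$; the natural choice is to center it, e.g.\ $C=\vraisup(Fx)_{i^*}-L\max y$ or an average, and then verify via \eqref{e:2} that every attained value of $(Fx)_{i^*}(t)$ falls inside $[\,L\min y+C,\,L\max y+C\,]$ for a.e.\ $t$. Once containment holds, for each such $t$ the equation $L\,y(\tau(t))+C=(Fx)_{i^*}(t)$ has a solution $\tau(t)$ by continuity of $y$ and the intermediate value theorem; measurability of $t\mapsto\tau(t)$ follows from a standard selection argument since the set-valued map $t\mapsto\{s:L\,y(s)+C=(Fx)_{i^*}(t)\}$ has measurable graph and nonempty closed values. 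The remaining verification is routine: $y$ satisfies \eqref{e:1y} by construction, is non-constant by choice of $i^*$, and the boundary conditions are inherited, completing the reduction to the scalar affine problem.
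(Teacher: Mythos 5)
Your proposal is correct and follows essentially the same route as the paper: pick the component $y=x_{i^*}$ of maximal oscillation (hence non-constant), observe via \eqref{e:2} that the essential range of $(Fx)_{i^*}$ has length at most $L\cdot\mathrm{osc}(y)$, and then realize $(Fx)_{i^*}(t)=L\,y(\tau(t))+C$ a.e.\ by shifting with $C$ and selecting $\tau$ measurably. In fact your write-up makes explicit the two points the paper's one-line argument glosses over (the choice of $C$ ensuring range containment, and the measurable selection), so it is a faithful, more detailed version of the same proof.
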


\begin{proof}
Suppose $y=x_j$ is a non-constant component of the solution $x$ to \eqref{e:1} for which
the right-hand side of \eqref{e:2} takes the maximum. Then the length of the range of $(Fx)_j$ does not exceed the length of the range of $x_j$ multiplied the constant $L$. So, there exist
a measurable function $\tau:[0,T]\to[0,T]$ and a constant $C$ such that
\begin{equation*}
    (Fx)_j(t)=L\, y(\tau(t))+C
\end{equation*}
for almost all $t\in[0,T]$. This proves the Lemma.
\end{proof}

\begin{lemma}\label{l-2}
Let $L>0$. Problem \eqref{e:1y} has a unique solution for each measurable $\tau:[0,T]\to [0,T]$ and each constant  $C\in\spaceR^1$ if
\begin{equation}\label{e:Ll}
    L<\frac{1}{K_n\,T^n}.
\end{equation}
\end{lemma}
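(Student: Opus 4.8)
The plan is to show that the constant function $y\equiv -C/L$ is always a solution of \eqref{e:1y}, so that existence is immediate, and then to prove that condition \eqref{e:Ll} forces this to be the only solution. Indeed, for $y\equiv -C/L$ we have $y^{(n)}\equiv 0$ and $L\,y(\tau(t))+C=-C+C=0$ for all $t$, while the periodic boundary conditions hold trivially; thus the whole content of the lemma is the uniqueness assertion. By linearity, taking the difference $w=y_1-y_2$ of two solutions, uniqueness amounts to showing that the homogeneous problem
\begin{equation*}
 w^{(n)}(t)=L\,w(\tau(t)),\quad t\in[0,T],\qquad w^{(i)}(0)=w^{(i)}(T),\ i=0,\dots,n-1,
\end{equation*}
has only the trivial solution whenever $L\,K_nT^n<1$.

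I will work throughout with the oscillation $\omega(z):=\max_{[0,T]}z-\min_{[0,T]}z$, which is translation invariant and hence insensitive to the additive-constant ambiguity of the periodic problem, through a two-sided estimate. On one side, since $w(\tau(t))$ takes values in the range $[\min w,\max w]$ of $w$, we get the elementary bound $\omega(w\circ\tau)\le\omega(w)$, whence
\begin{equation*}
 \omega\bigl(w^{(n)}\bigr)=L\,\omega(w\circ\tau)\le L\,\omega(w).
\end{equation*}
On the other side I will use the sharp oscillation form of the Favard inequality for periodic functions, $\omega(w)\le K_nT^n\,\omega\bigl(w^{(n)}\bigr)$, valid for every $w\in\spaceAC^{n-1}$ satisfying the periodic boundary conditions. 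Combining the two gives $\omega(w)\le L\,K_nT^n\,\omega(w)$; since \eqref{e:Ll} means $L\,K_nT^n<1$, this forces $\omega(w)=0$, so $w$ is constant. Substituting a constant into the homogeneous equation yields $0=L\,w$, hence $w\equiv0$ because $L>0$, and uniqueness follows.

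The crux — and the step I expect to be the main obstacle — is establishing the oscillation inequality $\omega(w)\le K_nT^n\,\omega\bigl(w^{(n)}\bigr)$ with \emph{exactly} the constant $K_nT^n$. One cannot simply quote Proposition~\ref{p-1}(1), which bounds $\max_{[0,T]}|w|$ (for zero-mean $w$) by $K_nT^n\,\vraisup_{[0,T]}|w^{(n)}|$: passing from $\max|w|$ to $\omega(w)$ and from $\vraisup|w^{(n)}|$ to $\omega\bigl(w^{(n)}\bigr)$ each costs a factor $2$, and the naive chain yields only the far weaker threshold $L\,K_nT^n\ge 1/2$. The sharp constant is recovered by duality: writing $g=w^{(n)}$ (which is automatically mean-zero by periodicity) and representing $w(t_1)-w(t_2)=\int_0^T[\Phi_n(t_1-s)-\Phi_n(t_2-s)]\,g(s)\,ds$ through the periodic Green kernel $\Phi_n$, one uses that $g$ is mean-zero to subtract an arbitrary constant $\xi$ from the kernel and to replace $g$ by $g$ minus the midpoint of its range, for which $|g-\mathrm{mid}|\le\omega(g)/2$. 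This produces $\omega(w)\le\tfrac12\,\omega(g)\,\sup_{t_1,t_2}\min_\xi\|\Phi_n(t_1-\cdot)-\Phi_n(t_2-\cdot)-\xi\|_{\spaceL_1}$, and the supremum is attained at antipodal points $t_2=t_1+T/2$, by the symmetry of the extremal Euler spline, which doubles the single-kernel $\spaceL_1$-deviation and exactly cancels the $\tfrac12$. After rescaling to $[0,2\pi]$ the resulting quantity is precisely the Favard quantity $\min_\xi\int_0^{2\pi}|\phi_n(s)-\xi|\,ds=K_n(2\pi)^n$ recorded in Proposition~\ref{p-1}(2); this identification is where the bookkeeping is genuinely delicate and is the heart of the proof.
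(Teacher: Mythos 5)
Your overall plan is viable and genuinely different from the paper's: you dispose of existence by exhibiting the constant solution $y\equiv-C/L$ (the paper instead invokes the Fredholm property of problem \eqref{e:1y} to reduce unique solvability to the homogeneous problem), and you replace the paper's contraction-mapping argument by an a priori oscillation estimate. Both routes ultimately rest on the same ingredients: the Levin--Stechkin representation of a periodic function through its $n$-th derivative, and the identity $\min_{\xi}\int_0^{2\pi}|\phi_n(s)-\xi|\,ds=K_n(2\pi)^n$ of Proposition~\ref{p-1}. Your key inequality $\omega(w)\le K_nT^n\,\omega\bigl(w^{(n)}\bigr)$ for $T$-periodic $w$ is in fact true with the sharp constant, so the strategy can be carried out; your reduction from it to the lemma (the chain $\omega(w)\le K_nT^n\,\omega(w^{(n)})\le LK_nT^n\,\omega(w)$, forcing $w$ constant and then $w\equiv0$) is correct.

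However, the step you yourself call the heart of the proof does not hold up as written. The double subtraction is algebraically illegal: writing $K(s)=\Phi_n(t_1-s)-\Phi_n(t_2-s)$ with $\Phi_n(u)=\phi_n(2\pi u/T)$ extended periodically, and letting $m$ be the midpoint of the range of $g$, one has
\begin{equation*}
\int_0^T\bigl(K(s)-\xi\bigr)\bigl(g(s)-m\bigr)\,ds=\int_0^T K(s)\,g(s)\,ds+\xi m T,
\end{equation*}
because $g-m$ is no longer mean-zero; the cross term $\xi mT$ survives, so your bound $\omega(w)\le\tfrac12\,\omega(g)\,\sup_{t_1,t_2}\min_\xi\norm{K-\xi}_{\spaceL_1}$ does not follow. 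Moreover, the claim that the supremum is attained at antipodal points by symmetry of the extremal spline is left unproven --- and it is unnecessary, since for the lemma one only needs an upper bound, and $\norm{K}_{\spaceL_1}\le2\min_\xi\norm{\Phi_n-\xi}_{\spaceL_1}$ is just the triangle inequality. In fact the correct derivation is much shorter than your sketch: put the difference on $g$ rather than on the kernel,
\begin{equation*}
w(t_1)-w(t_2)=\frac{T^{n-1}}{(2\pi)^{n-1}}\int_0^T\bigl(\phi_n(2\pi s/T)-\xi\bigr)\bigl(g(t_1-s)-g(t_2-s)\bigr)\,ds,
\end{equation*}
use $|g(t_1-s)-g(t_2-s)|\le\omega(g)$ almost everywhere, and minimize over $\xi$; since $\min_\xi\int_0^T|\phi_n(2\pi s/T)-\xi|\,ds=K_nT(2\pi)^{n-1}$, this yields exactly $\omega(w)\le K_nT^n\,\omega(g)$ with no factor of $2$ to cancel. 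With this repair (or, alternatively, subtracting the midpoint from $g$ alone, keeping $\xi=0$ in the kernel difference, and applying the triangle inequality), your proof becomes complete; as submitted, the crucial inequality rests on an argument that is wrong in its bookkeeping and on an unproven attainment claim.
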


\begin{proof}
Problem \eqref{e:1y} has the Fredholm property \cite{AMR}. Hence, this problem is uniquely solvable if and only if the homogeneous problem
\begin{equation}\label{e:1yh}
y^{(n)}(t)=L\,y(\tau(t)),\  t\in[0,T],
\quad y^{(i)}(0)=y^{(i)}(T),\  i=0,\ldots,n-1.
\end{equation}
has only the trivial solution. Let $y$ be a nontrivial solution of \eqref{e:1yh}.
From \cite{Levin, Levin1} it follows that for some constant $C_1$ and any constant $\xi$  the solution $y$ satisfies the equality
\begin{equation}\label{e:333}
\begin{split}
       y(t)=\frac{T^{n-1}}{(2\pi)^{n-1}}\int_0^T (\phi_n({2\pi s}/{T})-\xi)y^{(n)}(t-s)\,ds+C_1=\\
       \frac{T^{n-1}}{(2\pi)^{n-1}}\int_0^T (\phi_n({2\pi s}/{T})-\xi) L y(\tau(t-s))\,ds+C_1,
\end{split}
\end{equation}
where $t\in[0,T]$, $y(\zeta-T)=y(\zeta)$, $\tau(\zeta-T)=\tau(\zeta)$, $\zeta\in[0,T]$; $\phi_n$ is defined in Proposition \ref{p-1}.
Therefore, if
\begin{equation}
\begin{split}
L<\frac{(2\pi)^{n-1}}{T^{n-1}
\inf\limits_{\xi\in\spaceR} \int_0^T|\phi_n(2\pi s/T)-\xi|\,ds}=\\
\frac{(2\pi)^{n}}{T^{n}\inf\limits_{\xi\in\spaceR} \int_0^{2\pi}|\phi_n(s)-\xi|\,ds}=\frac{1}{K_nT^n},
\end{split}
\end{equation}
then  the linear operator $A$ in the right-hand side of \eqref{e:333}
\begin{equation*}
\begin{split}
       (Ay)(t)=\frac{T^{n-1}L}{(2\pi)^{n-1}}\int_0^T (\phi_n({2\pi s}/{T})-\xi)  y(\tau(t-s))\,ds+C_1,\quad t\in[0,T],
\end{split}
\end{equation*}
is a contraction in  $\spaceL_\infty([0,T],\spaceR^1)$. In this case, for each $C_1$ equation \eqref{e:333}
has a unique solution which is a constant (we use here the equality $\int_0^T \phi_n(2\pi t/T)\,dt=0$). From \eqref{e:1yh} it follows that this constant is zero. Therefore,
problem \eqref{e:1y} is uniquely solvable.
\end{proof}

\begin{proof}[Proof of Theorem \ref{t-1}] Let \eqref{e:1} have a non-constant solution.
From Lem\-ma \ref{l-1} it follows  that the non-constant component $x_j$ (from the proof of Lemma \ref{l-1}) of the solution $x$ to \eqref{e:1} is a solution to \eqref{e:1y} with some constant $C$ and a measurable function $\tau:[0,T]\to[0,T]$. If \eqref{e:Ll}, it follows from Lemma \ref{l-2} that this solution is unique:
$x_j(t)\equiv -C/L$. Then from \eqref{e:2} it follows  that each component $x_i$ of the non-constant solution $x$ is constant. Therefore,
inequality \eqref{e:Ll} does not hold.
\end{proof}

Now assume that an operator  $F$ in \eqref{e:1} acts into the space of integrable functions $\spaceL_1([0,T],\spaceR^m)$.
\begin{theorem}\label{t-2}
Suppose an operator $F$ acts from the space $\spaceC([0,T],{\spaceR}^m)$ into the space ${{\spaceL}_1([0,T],{\spaceR}^m)}$
and there exist positive functions $p_i\in\spaceL_1([0,T],\spaceR^1)$, $i=1,\ldots,m$, such that
for every $x\in\spaceC([0,T],\spaceR^m)$ the inequality
\begin{equation}\label{e:5}
\begin{array}{l}
\displaystyle\max\limits_{i=1,\ldots,m}\left(\underset{t\in[0,T]}{{\rm vrai\,sup}}\,\frac{(Fx)_i(t)}{p_i(t)}-\underset{t\in[0,T]}{{\rm vrai\,inf}}\,\frac{(Fx)_i(t)}{p_i(t)}\right)\\[3pt]
\leqslant
\max\limits_{i=1,\ldots,m}\left(\max\limits_{t\in[0,T]}x_i(t)-\min\limits_{t\in[0,T]}x_i(t)\right)
\end{array}
\end{equation}
holds.
If periodic problem \eqref{e:1} has a non-constant solution, then  for each $i=1,\ldots,n$
\begin{equation}\label{e:4int}
    \norm{p_i}_{\spaceL_1}\ge 4\ \text{ if $\ n=1$},\quad     \norm{p_i}_{\spaceL_1} > \frac{4}{K_{n-1}T^{n-1}}\ \text{ if $\ n\ge2$}.
\end{equation}
\end{theorem}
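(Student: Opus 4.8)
The plan is to follow the two-step scheme of the proof of Theorem~\ref{t-1}, with the constant $L$ replaced by the weights $p_i$ and the estimate \eqref{e:2} replaced by the weighted estimate \eqref{e:5}. First I would reduce to a scalar problem as in Lemma~\ref{l-1}: if $x$ is a non-constant solution of \eqref{e:1}, let $y=x_j$ be a (necessarily non-constant) component on which the right-hand side of \eqref{e:5} attains its maximum. By \eqref{e:5} the oscillation of $(Fx)_j/p_j$ does not exceed that of $y$, so there exist a measurable $\t:[0,T]\to[0,T]$ and a constant $C$ with $(Fx)_j(t)=p_j(t)\bigl(y(\t(t))+C\bigr)$ for almost all $t$. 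Hence $y$ solves
\begin{equation*}
y^{(n)}(t)=p_j(t)\,y(\t(t))+C\,p_j(t),\quad t\in[0,T],\qquad y^{(i)}(0)=y^{(i)}(T),\ \ i=0,\dots,n-1,
\end{equation*}
and it remains to show that a non-constant $y$ forces the stated lower bound on $\norm{p_j}_{\spaceL_1}$.

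For $n=1$ I would argue directly, writing $y'=p_j\,g$ with $g=(Fx)_j/p_j$, whose range does not exceed $\mathrm{osc}:=\max y-\min y$. Pick a point of minimum and a point of maximum of $y$; integrating $y'$ over the rising arc between them and over the complementary falling arc gives $\mathrm{osc}\le(\max g)\,P_1$ and $\mathrm{osc}\le(-\min g)\,P_2$, where $P_1,P_2$ are the integrals of $p_j$ over the two arcs and $P_1+P_2=\norm{p_j}_{\spaceL_1}$. Since $\max g-\min g\le\mathrm{osc}$, adding the two reciprocal inequalities yields $1/P_1+1/P_2\le1$, and by the arithmetic--harmonic mean inequality $1/P_1+1/P_2\ge4/(P_1+P_2)$; therefore $\norm{p_j}_{\spaceL_1}\ge4$. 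The extremal configuration is attainable, which is why the inequality is not strict.

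For $n\ge2$ I would mimic Lemma~\ref{l-2}. Writing the homogeneous problem $y^{(n)}(t)=p_j(t)y(\t(t))$ in the integral form \eqref{e:333}, using the periodicity identity $\int_0^T p_j(t-s)\,ds=\norm{p_j}_{\spaceL_1}$, and estimating by Hölder in the pairing $\spaceL_\infty\times\spaceL_1$,
\begin{equation*}
\int_0^T\bigl|\phi_n(2\pi s/T)-\xi\bigr|\,p_j(t-s)\,ds\le\Bigl(\vraisup_{s\in[0,T]}\bigl|\phi_n(2\pi s/T)-\xi\bigr|\Bigr)\norm{p_j}_{\spaceL_1},
\end{equation*}
the operator $A$ of \eqref{e:333} becomes a contraction in $\spaceL_\infty([0,T],\spaceR^1)$ as soon as $\dd\frac{T^{n-1}}{(2\pi)^{n-1}}\Bigl(\inf_{\xi}\norm{\phi_n-\xi}_{\spaceL_\infty}\Bigr)\norm{p_j}_{\spaceL_1}\le1$. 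As in Theorem~\ref{t-1} this forces $y$ to be constant, a contradiction.

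The crux is to evaluate the constant. The identity I expect to prove is
\begin{equation*}
\inf_{\xi\in\spaceR}\norm{\phi_n-\xi}_{\spaceL_\infty}=\tfrac12\bigl(\max\phi_n-\min\phi_n\bigr)=\tfrac14\,K_{n-1}(2\pi)^{n-1},
\end{equation*}
which turns the contraction condition into $\norm{p_j}_{\spaceL_1}\le4/(K_{n-1}T^{n-1})$. To get it I would use $\phi_n'=\phi_{n-1}$ (immediate from the series in Proposition~\ref{p-1}), the facts that $\phi_{n-1}$ has zero mean and, for $n\ge2$, exactly one positive and one negative arch per period, so that $\max\phi_n-\min\phi_n=\tfrac12\int_0^{2\pi}|\phi_{n-1}|$, and finally Proposition~\ref{p-1}(2) together with the vanishing median of $\phi_{n-1}$, which identifies $\int_0^{2\pi}|\phi_{n-1}|$ with $\min_\xi\int_0^{2\pi}|\phi_{n-1}-\xi|=K_{n-1}(2\pi)^{n-1}$. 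This sign and oscillation analysis of $\phi_{n-1}$ is the main obstacle, and it is exactly what fails at $n=1$ (where $\phi_0$ is not a function), explaining the separate treatment. Finally, to upgrade $\ge$ to the strict inequality for $n\ge2$, I would note that for a genuine $p_j\in\spaceL_1$ the Hölder estimate above is strict, since $|\phi_n-\xi|$ attains its essential supremum only on a null set while $p_j>0$; hence at $\norm{p_j}_{\spaceL_1}=4/(K_{n-1}T^{n-1})$ the operator $A$ is still a strict contraction and no non-constant solution can exist.
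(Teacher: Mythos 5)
Your reduction to the scalar problem \eqref{e:1yp} coincides with the paper's Lemma~\ref{l-3}, and your direct integration argument for $n=1$ is correct (it is essentially the argument of the paper's reference \cite{5}). For $n\ge2$, however, the paper does not argue by contraction at all: it invokes Lemma~\ref{l-4}, a sharp ``if and only if'' solvability theorem quoted from \cite{5,7,n3,n4,6,8,Bravyi}, so your proposal is in effect an attempt to prove that lemma by the method of the paper's Lemma~\ref{l-2}. This attempt has a genuine gap at exactly the step you call the crux: the identity
\begin{equation*}
\inf_{\xi\in\spaceR}\norm{\phi_n-\xi}_{\spaceL_\infty}=\tfrac14\,K_{n-1}(2\pi)^{n-1}
\end{equation*}
is \emph{false} for every odd $n\ge3$, because the median of $\phi_{n-1}$ does not vanish when its index $n-1$ is even. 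Concretely, $\phi_2(t)=\frac{1}{4\pi}\bigl(\frac{\pi^2}{3}-(t-\pi)^2\bigr)$ on $[0,2\pi]$ is positive precisely on an interval of length $2\pi/\sqrt3>\pi$, so its median is strictly positive; accordingly $\int_0^{2\pi}|\phi_2(s)|\,ds=\frac{2\pi^2}{9\sqrt3}\approx1.266$ is strictly larger than $\min_{\xi}\int_0^{2\pi}|\phi_2(s)-\xi|\,ds=K_2(2\pi)^2=\frac{\pi^2}{8}\approx1.234$. Your arch argument does give $\max\phi_3-\min\phi_3=\frac12\int_0^{2\pi}|\phi_2(s)|\,ds=\frac{\pi^2}{9\sqrt3}$, but then $\inf_\xi\norm{\phi_3-\xi}_{\spaceL_\infty}=\frac{\pi^2}{18\sqrt3}>\frac{\pi^2}{32}=\tfrac14K_2(2\pi)^2$.

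Consequently, for $n=3$ your contraction condition reads $\norm{p_j}_{\spaceL_1}<72\sqrt3/T^2\approx124.7/T^2$, so the argument proves only $\norm{p_j}_{\spaceL_1}\ge 72\sqrt3/T^2$, which falls short of the claimed sharp bound $4/(K_2T^2)=128/T^2$; the same deficit occurs for every odd $n\ge3$. (For even $n$ your identity does hold, since $\phi_{n-1}$ with odd index is odd about $t=\pi$ and has zero median, and your strict-H\"older treatment of the borderline case is sound; so your proof is complete for $n=1$ and for even $n$.) The failure is intrinsic to the sup-norm contraction estimate rather than to your execution of it: reaching the constant $4/(K_{n-1}T^{n-1})$ for all $n$ requires the finer analysis of the cited papers behind Lemma~\ref{l-4}, which is precisely why the paper quotes that lemma instead of proving it by a contraction argument.
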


To prove Theorem \ref{t-2}, we also need two lemmas.
\begin{lemma}\label{l-3}
Let $F$ satisfy inequality \eqref{e:5}.
If problem \eqref{e:1} has a non-constant solution, there exist a measurable function $\tau:[0,T]\to[0,T]$ and a constant $C$ such that one of non-constant components of the solution satisfies the scalar periodic boundary value problem
\begin{equation}\label{e:1yp}
\left\{
\begin{array}{l}
y^{(n)}(t)=p(t)(y(\tau(t))+C),\quad t\in[0,T],\\[3pt]
y^{(i)}(0)=y^{(i)}(T),\quad i=0,\ldots,n-1.
\end{array}
\right.
\end{equation}
\end{lemma}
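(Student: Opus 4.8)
The plan is to mirror the proof of Lemma~\ref{l-1}, the only change being that the pointwise scaling by $p_j$ enters through the quotient $(Fx)_i/p_i$ that appears in \eqref{e:5}. First I would select the index $j$ at which the right-hand side of \eqref{e:5} attains its maximum, i.e.\ the component $x_j$ of largest oscillation,
\[
\max\limits_{t\in[0,T]}x_j(t)-\min\limits_{t\in[0,T]}x_j(t)=\max\limits_{i=1,\ldots,m}\left(\max\limits_{t\in[0,T]}x_i(t)-\min\limits_{t\in[0,T]}x_i(t)\right),
\]
and set $y=x_j$, $p=p_j$. Since $x$ is non-constant this common maximal oscillation is strictly positive, so $y$ is a non-constant component, as the statement requires; moreover $y^{(n)}=(Fx)_j$ and $y$ inherits the periodic boundary conditions from $x$.

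Next I would introduce the quotient $g=(Fx)_j/p_j$, well defined since $p_j>0$, and read off from \eqref{e:5}, taking $i=j$ on the left, the oscillation bound
\[
\vraisup\limits_{t\in[0,T]} g(t)-\vraiinf\limits_{t\in[0,T]} g(t)\le \max\limits_{t\in[0,T]}y(t)-\min\limits_{t\in[0,T]}y(t).
\]
In particular $g$ has finite essential oscillation, hence its essential range is a bounded interval no longer than the range $[\min y,\max y]$ of the continuous function $y$. Choosing the constant $C=\vraiinf g-\min y$ shifts $g$ so that $g(t)-C\in[\min y,\max y]$ for almost every $t$.

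Now comes the core construction. Because $y$ is continuous it attains, by the intermediate value theorem, every value in $[\min y,\max y]$, so for almost every $t$ the equation $y(s)=g(t)-C$ has a solution $s\in[0,T]$. I would define $\tau(t)$ to be one such point, for instance $\tau(t)=\min\{s\in[0,T]:y(s)=g(t)-C\}$. This gives $g(t)=y(\tau(t))+C$, whence for almost all $t$
\[
y^{(n)}(t)=(Fx)_j(t)=p_j(t)\,g(t)=p(t)\bigl(y(\tau(t))+C\bigr),
\]
which together with the periodicity of $y$ is exactly problem \eqref{e:1yp}.

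The one point that needs care — exactly as in the proof of Lemma~\ref{l-1} — is the measurability of the selection $\tau$. The hard part will be to verify that $t\mapsto\min\{s\in[0,T]:y(s)=g(t)-C\}$ is measurable. I would settle this by a measurable selection argument: for each value $v\in[\min y,\max y]$ the set $y^{-1}(\{v\})$ is compact and nonempty, the leftmost-point selector $v\mapsto\min y^{-1}(\{v\})$ is a Borel function of $v$, and composing it with the measurable map $t\mapsto g(t)-C$ yields a measurable $\tau$. With measurability secured, the displayed identity completes the proof.
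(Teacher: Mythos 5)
Your proof is correct and follows essentially the same route as the paper's: select the component $x_j$ of maximal oscillation, use \eqref{e:5} to bound the essential oscillation of $(Fx)_j/p_j$ by the oscillation of $x_j$, and then realize the shifted quotient as $y(\tau(\cdot))+C$ using the intermediate value theorem. The only difference is one of detail, not of approach: you make explicit the choice of $C$ and the measurable-selection argument (leftmost preimage point, a Borel selector composed with a measurable function) that the paper compresses into the single phrase ``there exist a measurable function $\tau$ and a constant $C$''.
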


\begin{proof}
Suppose $y=x_j$ is a non-constant component of the solution $x$ to \eqref{e:1} for which the right-hand side of \eqref{e:5} takes the maximum. Then the length of the range of $(Fx)_j/p_j$ does not exceed the length of the range of $x_j$. So, there exist
a measurable function $\tau:[0,T]\to[0,T]$ and a constant $C$ such that
\begin{equation*}
    (Fx)_j(t)=p(t)(y(\tau(t))+C)\ \text{ for almost all $t\in[0,T]$},
\end{equation*}
where $p=p_j$. This proves the Lemma.
\end{proof}

\begin{lemma}[\cite{5,7,n3,n4,6,8,Bravyi}]\label{l-4}
Let a positive number ${\mathcal P}$ be given.
Problem \eqref{e:1yp} has a unique solution for each measurable $\tau:[0,T]\to [0,T]$ and each non-negative function $p\in\spaceL_1([0,T],\spaceR^1)$ with norm $\norm{p}_{\spaceL_1}={\mathcal P}$ if and only if
\begin{equation}\label{e:pn}
    {\mathcal P}< 4\ \text{ if $\ n=1$},\quad
    {\mathcal P}\le \frac{4}{K_{n-1}T^{n-1}}\ \text{ if $\ n\ge2$}.
\end{equation}
\end{lemma}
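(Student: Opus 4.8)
The plan is to follow the scheme of Lemma~\ref{l-2}: reduce to the homogeneous problem and then match a sharp a priori estimate against an explicit extremal construction, the two meeting exactly at $4/(K_{n-1}T^{n-1})$. Problem \eqref{e:1yp} is linear in $y$ and has the Fredholm property, so it is uniquely solvable for all data if and only if the homogeneous problem $y^{(n)}(t)=p(t)y(\tau(t))$, $y^{(i)}(0)=y^{(i)}(T)$, has only $y\equiv0$. Two elementary facts drive the argument. First, $|y^{(n)}(t)|=p(t)\,|y(\tau(t))|$ since $p\ge0$; and since $\int_0^T y^{(n)}=0$ with $p\ge0$, a nontrivial periodic $y$ must change sign, so writing $b=\max y\ge0$ and $a=\min y\le0$ one has $a\le0\le b$ (a constant solution is forced to vanish, using $\int_0^T p\,y=0$ and $\norm{p}_{\spaceL_1}>0$). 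Second, for a periodic $y$ the oscillation is at most half the total variation, $b-a\le\tfrac12\norm{y'}_{\spaceL_1}$, with equality for unimodal $y$.

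For sufficiency I would split $[0,T]$ by the sign of $y^{(n)}$. Because $\int_0^T y^{(n)}=0$, each of $\int_{\{y^{(n)}>0\}}y^{(n)}$ and $\int_{\{y^{(n)}<0\}}|y^{(n)}|$ equals $\tfrac12\norm{y^{(n)}}_{\spaceL_1}$; bounding $y(\tau(t))$ by $b$ on the first set and by $|a|$ on the second gives $\tfrac12\norm{y^{(n)}}_{\spaceL_1}\le b\,P_+$ and $\tfrac12\norm{y^{(n)}}_{\spaceL_1}\le |a|\,P_-$, where $P_\pm$ are the masses of $p$ on the two sets. Taking reciprocals, adding, and using $\tfrac1b+\tfrac1{|a|}\ge 4/(b-a)$ yields $\norm{p}_{\spaceL_1}\ge 2\norm{y^{(n)}}_{\spaceL_1}/(b-a)$. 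Applying next the order-$(n-1)$ inequality to $v=y'$ (a zero-mean periodic function with $v^{(n-1)}=y^{(n)}$), namely $\norm{v}_{\spaceL_1}\le K_{n-1}T^{n-1}\norm{v^{(n-1)}}_{\spaceL_1}$, together with $b-a\le\tfrac12\norm{y'}_{\spaceL_1}$, gives $b-a\le\tfrac12 K_{n-1}T^{n-1}\norm{y^{(n)}}_{\spaceL_1}$. Chaining the two estimates produces $\norm{p}_{\spaceL_1}\ge 4/(K_{n-1}T^{n-1})$, so no nontrivial $y$ can exist under \eqref{e:pn}.

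The inequality $\norm{v}_{\spaceL_1}\le K_{n-1}T^{n-1}\norm{v^{(n-1)}}_{\spaceL_1}$ is where the Favard constant enters and is the step I expect to be the main obstacle. For $n\ge2$ I would represent $v$ through the periodic kernel of order $n-1$, i.e. $\tfrac{T^{n-2}}{(2\pi)^{n-2}}\phi_{n-1}(2\pi\cdot/T)$ as in \eqref{e:333}, and use that the $\spaceL_1\to\spaceL_1$ norm of a convolution equals the $\spaceL_1$ norm of its kernel; the best constant is then $\inf_\xi\int_0^T\bigl|\tfrac{T^{n-2}}{(2\pi)^{n-2}}\phi_{n-1}(2\pi s/T)-\xi\bigr|\,ds=K_{n-1}T^{n-1}$ by Proposition~\ref{p-1}(2). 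For $n=1$ this degenerates to the identity $\norm{v}_{\spaceL_1}\le\norm{v}_{\spaceL_1}$ with $K_0=1$.

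For necessity I would run the estimate backwards to show sharpness. Take $v^{*}$ nearly attaining the order-$(n-1)$ constant, set $y^{*}=\int v^{*}$ normalized so that $b=|a|$ and $y^{*}$ is unimodal, and route $\tau$ so that $\tau(t)$ points to the maximizer of $y^{*}$ where $y^{*(n)}>0$ and to the minimizer where $y^{*(n)}<0$; then $p=|y^{*(n)}|/b\ge0$ and $\norm{p}_{\spaceL_1}=4\norm{v^{*(n-1)}}_{\spaceL_1}/\norm{v^{*}}_{\spaceL_1}$, which approaches $4/(K_{n-1}T^{n-1})$, producing a nontrivial solution once $\mathcal P$ exceeds the threshold. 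The strict-versus-nonstrict dichotomy in \eqref{e:pn} is precisely the attainment question for the operator-norm inequality, and is the second delicate point: for $n=1$ the bound is trivially sharp and the extremal $y^{*}$ (a triangular wave, $p\equiv 4/T$) lies in $\spaceL_\infty\subset\spaceL_1$, so a genuine counterexample occurs at $\mathcal P=4$ and the inequality must be strict; for $n\ge2$ the extremal forces $y^{(n)}$ to concentrate into point masses, impossible for $p\in\spaceL_1$, so equality is never attained and unique solvability persists up to and including $\mathcal P=4/(K_{n-1}T^{n-1})$. Everything else — the sign split, the harmonic-mean step, and the routing of $\tau$ — is elementary once the Favard-constant computation is in hand.
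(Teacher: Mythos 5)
Your proposal should first be set against the fact that the paper never proves Lemma~\ref{l-4} at all: it is imported from \cite{5,7,n3,n4} (for $n=1,2,3,4$) and \cite{6,8,Bravyi} (for general $n$), so what you wrote has to stand as a self-contained proof. Its core chain is in fact correct and nicely parallel to Lemma~\ref{l-2}: for a nontrivial solution of the homogeneous problem one has $a<0<b$; the sign-splitting together with $\int_0^T y^{(n)}\,dt=0$ gives $P_+\ge\norm{y^{(n)}}_{\spaceL_1}/(2b)$ and $P_-\ge\norm{y^{(n)}}_{\spaceL_1}/(2|a|)$; the harmonic--arithmetic mean step gives $\norm{p}_{\spaceL_1}\ge 2\norm{y^{(n)}}_{\spaceL_1}/(b-a)$; and combining with $b-a\le\tfrac12\norm{y'}_{\spaceL_1}$ and the bound $\norm{y'}_{\spaceL_1}\le K_{n-1}T^{n-1}\norm{y^{(n)}}_{\spaceL_1}$ (Young's inequality for the zero-mean convolution representation, with Proposition~\ref{p-1}, item 2), evaluating the kernel norm) yields $\mathcal P\ge 4/(K_{n-1}T^{n-1})$. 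This correctly establishes unique solvability under \emph{strict} inequality for every $n$, and your triangular wave with $p\equiv4/T$ correctly settles necessity at $\mathcal P=4$ for $n=1$.

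However, the two points you yourself flag as delicate are genuine gaps, and they are exactly where the content of the lemma sits. (i) For $n\ge2$ the lemma asserts solvability \emph{at the endpoint} $\mathcal P=4/(K_{n-1}T^{n-1})$ (this is what Theorem~\ref{t-2} needs to get the strict inequality in \eqref{e:4int}), and your justification is a single sentence about point masses. A proof must show that equality cannot hold simultaneously in every link of your chain when $y^{(n)}\in\spaceL_1$; the crux is the equality case of Young's inequality for the sign-changing kernel $\Phi-\xi^*$, $\Phi(s)=\tfrac{T^{n-2}}{(2\pi)^{n-2}}\phi_{n-1}(2\pi s/T)$, which requires that for a.e.\ $t$ the function $s\mapsto(\Phi(t-s)-\xi^*)y^{(n)}(s)$ keep constant sign, and one must then show this fails whenever $y^{(n)}\neq0$ on a set of positive measure. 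That argument is absent. (ii) More seriously, your necessity direction rests on the claim that ``the $\spaceL_1\to\spaceL_1$ norm of a convolution equals the $\spaceL_1$ norm of its kernel.'' On the subspace of zero-mean functions --- the only place the inequality lives --- this is false in general: by duality the operator norm there equals $\tfrac12\sup_{t_1,t_2}\norm{\Phi(\cdot-t_1)-\Phi(\cdot-t_2)}_{\spaceL_1}$, which is always $\le\inf_\xi\norm{\Phi-\xi}_{\spaceL_1}$ but coincides with it only for kernels with a special structure: the super- and sub-level sets of the median level $\xi^*$ must be carried onto one another by a shift. Proving this property of the Favard kernels $\phi_{n-1}$ (it follows from their unimodality and half-period symmetry) is the kernel-specific fact your sketch presupposes; it is also what makes your $y^*$ (approximately) unimodal with $\max y^*=-\min y^*$. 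Finally, necessity requires a counterexample for \emph{every} $\mathcal P$ above the threshold, not only values near it; this is fixable by routing the surplus mass of $p$ through $\tau$ to a zero of $y^*$, but it needs to be said. These missing steps are precisely what the cited papers \cite{6,8,Bravyi} actually establish.
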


For $n=1$, $n=2$, $n=3$, $n=4$  this Lemma is proved in \cite{5,7,n3,n4}, for arbitrary $n$ in \cite{6, 8, Bravyi}.

\begin{proof}[Proof of Theorem \ref{t-2}]
Let \eqref{e:1} have a non-constant solution.
From Lem\-ma \ref{l-3} it follows  that a non-constant component $x_j$ (from the proof of Lemma \ref{l-3}) of the solution $x$ to \eqref{e:1} is a solution to \eqref{e:1yp} with $p=p_j$, some constant $C$, some measurable function $\tau:[0,T]\to[0,T]$. If \eqref{e:pn}, it follows from Lemma \ref{l-4} that the solution $x_j$ is unique: $x_j(t)\equiv -C$. From \eqref{e:5} it follows  that each component $x_i$ of the non-constant solution $x$ is constant. Therefore, inequality \eqref{e:pn} does not  hold.
\end{proof}

\section{The sharpness of estimates}

The estimates \eqref{e:4} and \eqref{e:4int} in Theorems \ref{t-1} and \ref{t-2} are sharp. The sharpness of \eqref{e:4int} is shown in \cite{Bravyi}. The sharpness of \eqref{e:4}
for even $n$ was shown in \cite{bravyiei3} in other terms.
Now for every $n\ge1$ we obtain functions $\tau:[0,T]\to[0,T]$ such that the periodic boundary value problem
\begin{equation}\label{e:L}
    x^{(n)}(t)=Lx(\tau(t)),\quad t\in[0,T],\quad
x^{(i)}(0)=x^{(i)}(T),\quad i=0,\ldots,n-1,
\end{equation}
has a non-constant solution provided that \eqref{e:4} is an
identity:$L=\frac{1}{K_nT^n}$. Find a solution to the auxiliary problem
\begin{equation}\label{e:a}
    x^{(n)}(t)=L\,h(t),\quad t\in[0,T],\quad
x^{(i)}(0)=x^{(i)}(T),\quad i=0,\ldots,n-1,
\end{equation}
where $h(t)=1$ for $t\in[0,T/2]$  and $h(t)=-1$ for $t\in(T/2,T]$. Since $\int_0^T h(t)\,dt=0$, this problem has a solution. It is not unique and defined by the equality
\begin{equation*}
    x(t)=C+L\,\int_0^T  G(t,s) h(s)\,ds,\quad t\in[0,T],
\end{equation*}
where $C$ is an arbitrary constant, $G(t,s)$ is the Green function of the problem
\begin{equation*}
\begin{split}
x^{(n)}(t)=f(t),\quad t\in[0,T],\quad x(0)=0,\ x(T)=0\ (\text{if $n>1$}),\\
x^{(i)}(0)=x^{(i)}(T),\quad i=1,\ldots,n-2\ (\text{if $n>2$}).
\end{split}
\end{equation*}
We have a simple representation for the Green function $G(t,s)$:
\newcommand{\Beta}{{\mathcal B}}
\begin{equation*}
\begin{split}
    G(t,s)=\frac{T^n}{n!}(B_n(t/T)-B_n(0)-\Beta_n((t-s)/T)+B_n(1-s/T)),\\
    \quad t,s\in[0,T],
\end{split}
\end{equation*}
where $B_n(t)$, $n\ge1$, are the Bernoulli polynomials \cite[p.~804]{AS} which can be defined as unique solutions to the problems
\begin{equation*}
\begin{split}
B_n^{(n)}(t)=n!,\quad t\in[0,T],\quad
\int_0^1 B_n(t)\,dt=0,\ B_n^{(i)}(0)=B_n^{(i)}(T),\\
i=0,\ldots,n-2\ (\text{if $n>1$}),
\end{split}
\end{equation*}
$\Beta_n(t)=B_n(\{t\})$ are the periodic Bernoulli functions, $\{t\}$ is the fractional part of $t$.

Using the equality \cite[p.~805, 23.1.11]{AS}
\begin{equation*}
    \int_{t_1}^{t_2}B_n(s)\,ds=(B_{n+1}(t_2)-B_{n+1}(t_1))/(n+1),\quad n\ge1,
\end{equation*}
which is also valid for the functions $\Beta_n(t)$,
we obtain the representation for
solutions $y$ to problem \eqref{e:L}
\begin{equation*}
\begin{split}
    y(t)=C+\frac{2LT^n}{(n+1)!}(B_{n+1}(1/2)-B_{n+1}(0)+B_{n+1}(t/T)-\\
    \Beta_{n+1}(t/T-1/2)), \quad t\in[0,T],\quad C\in\spaceR^1.
\end{split}
\end{equation*}

For even $n=2m$, using \cite[p.~805, 23.19--22, 23.1.15]{AS}
\begin{equation*}
\begin{split}
    B_{2m+1}(1/4)=-B_{2m+1}(3/4)=(2m+1)4^{-2m-1}E_{2m},\\
    \quad B_{2m+1}(1/2)=B_{2m+1}(0)=0,\quad (-1)^mE_{2m}>0,
\end{split}
\end{equation*}
for $C=0$ we obtain that $y(T/4)=-y(3T/4)=(-1)^m$. Therefore, for $C=0$ the function $y$ is a non-constant solution to problem \eqref{e:L},
where $\tau(t)=\left\{\begin{array}{l} T/4\ \text{ if }\ t\in[0,T/2],\\
                                  3T/4\ \text{ if }\ t\in(T/2,T],
                     \end{array}
              \right.$          for $n=0\ {\rm mod}\ 4$,
and $\tau(t)=\left\{\begin{array}{l} 3T/4\ \text{ if }\ t\in[0,T/2],\\
                                  T/4\ \text{ if }\ t\in(T/2,T],
                     \end{array}
              \right.$          for $n=2\ {\rm mod}\ 4$.
Note that these functions $\tau$ were found in \cite{bravyiei3}.

For odd $n=2m-1$ using \cite[p.~805, 23.1.20--21, 23.1.15]{AS}
\begin{equation*}
\begin{split}
B_{2m}=B_{2m}(0)=B_{2m}(1),\ B_{{2m}}(1/2)=(2^{1-{2m}}-1)B_{2m},\\ (-1)^{m+1}B_{2m}>0
\end{split}
\end{equation*}
we have that  $y(0)=-y(T/2)=(-1)^m$ for $C=(-1)^{m}$.
Therefore, for $C=(-1)^{m}$ the function $y$ is a non-constant solution to problem \eqref{e:L}, where $\tau(t)=\left\{\begin{array}{l} T/2\ \text{ if }\ t\in[0,T/2],\\
                                  0\ \text{ if }\ t\in(T/2,T],
                     \end{array}
              \right.$          for $n=1\ {\rm mod}\ 4$,
$\tau(t)=\left\{
\begin{array}{l} 0\ \text{ if }\ t\in[0,T/2],\\
                T/2\ \text{ if }\ t\in(T/2,T],
\end{array}
              \right.$          for $n=3\ {\rm mod}\ 4$.

\section{Example. Equations with ''maxima''}

Let $L$ be a constant,
$\tau, \theta:\spaceR\to\spaceR$ measurable functions such that $\tau(t)\le \theta(t)$ for all $t\in\spaceR$. From Theorem \ref{t-1}, it follows that periods $T$ of non-constants solutions of the equation
\begin{equation*}
    x^{(n)}(t)=L\max\limits_{s\in[\tau(t),\theta(t)]} x(s),\quad t\in\spaceR,
\end{equation*}
satisfy the inequality
        \begin{equation}\label{e:4-2}
           |L|\, T^n\ge \frac{1}{K_n},
        \end{equation}
where the constants $K_n$ are defined by \eqref{e-55}.

Suppose $p:\spaceR\to\spaceR$ is a positive locally integrable $T$-periodic function: $p(t+T)=p(t)$, $p(t)>0$ for all $t\in\spaceR$.
From Theorem \ref{t-2}, it follows that if there exists a $T$-periodic non-constants solution of the equation
\begin{equation*}
    x^{(n)}(t)=p(t)\max\limits_{s\in[\tau(t),\theta(t)]} x(s),\quad t\in\spaceR,
\end{equation*}
then
\begin{equation}\label{e-777}
    \int_0^T p(t)\,dt\ge4\ \text{\ for $n=1$},\quad \int_0^T p(t)\,dt\, T^{n-1}> \frac{4}{K_{n-1}}\ \text{\ for $n\ge2$}.
\end{equation}
Inequalities \eqref{e:4-2} and \eqref{e-777} are sharp.

\section{Conclusion}

Now we formulate unimprovable necessary conditions for the existence of a non-constant periodic solution to \eqref{e:1} which follow from Theorems \ref{t-1} and \ref{t-2}:
if $F$ satisfies \eqref{e:2} and there exists a non-constant solution to \eqref{e:1}, then
$L=L_n$ satisfies the inequalities
\begin{equation*}
\begin{split}
    L_1\ge 4/T,\quad L_2\ge 32/T^2,\quad L_3\ge 132/T^3,\\
    \quad L_4\ge 6144/(5T^4),\quad     L_5\ge 7680/T^5,\ldots;
\end{split}
\end{equation*}
if $F$ satisfies \eqref{e:5} and there exists a non-constant solution to \eqref{e:1},
then  ${\mathcal P}={\mathcal P}_n=\max_{i=1,\ldots,n}\norm{p_i}_{\spaceL_1}$
satisfies the inequalities
\begin{equation*}
\begin{split}
    {\mathcal P}_1\ge 4,\quad {\mathcal P}_2> 16/T,\quad {\mathcal P}_3> 128/T^2,\quad {\mathcal P}_4> 768/T^3,\\
    \quad
    {\mathcal P}_5>  24776/(5T^4),\ldots.
\end{split}
\end{equation*}

It follows from Proposition 1 that $\lim\limits_{n\to\infty} (K_n)^{1/n}=1/(2\pi)$, therefore estimate \eqref{e:4} for large $n$ is close to estimate \eqref{e:ode} for equations without deviating arguments.

New results on existence and uniqueness of periodic solutions for higher order functional differential equations are obtained in  \cite{N5, N4, N3, N2}. Note that Theorems \ref{t-1} and \ref{t-2} cannot be derived from these articles.

\end{document}